\theoremstyle{definition}
\newtheorem{theorem}{Theorem}[section]
\newtheorem{lemma}[theorem]{Lemma}
\newtheorem{proposition}[theorem]{Proposition}
\newtheorem{observation}[theorem]{Observation}
\newtheorem{fact}[theorem]{Fact}
\theoremstyle{definition}
\newtheorem{definition}[theorem]{Definition}
\newtheorem*{ac}{Acknowledgement}
\theoremstyle{remark}
\newtheorem{remark}[theorem]{Remark}
\newenvironment{rmenum}{
\begin{enumerate}

}
{\end{enumerate}}
\newcommand{\parcut}[2]{\delta_{#1}(#2)}
\newcommand{\reverse}[1]{#1^{-1}}
\newcommand{\nonneg}{\ge 0}
\newcommand{\nonnegz}{Z_{\nonneg}}
\newcommand{\inc}[1]{\chi_{#1}}
\newcommand{\signfs}[1]{\partial_{#1}}
\newcommand{\signfse}[2]{\partial_{#1}(#2)}
\newcommand{\ssim}[1]{\overset{#1}{\leftrightarrow}}
\newcommand{\gsim}[1]{\overset{#1}{\sim}}
\newcommand{\gsimb}[2]{\overset{#1}{\sim}_{#2}}
\newcommand{\sgpart}[2]{\mathcal{P}^{#2}(#1)}
\newcommand{\fcompb}[2]{\mathcal{G}(#1, #2)}
\newcommand{\sgpartb}[3]{\mathcal{P}^{#3}(#1, #2)}
\newcommand{\onevec}{1}
\title[Bidirected Graphs I]{Bidirected Graphs I: Signed General Kotzig-Lov\'asz Decomposition}
\author{Nanao Kita}
\address{National Institute of Informatics, 
2-1-2 Hitotsubashi, Chiyoda-ku, Tokyo, Japan 101-8430}
\email{kita@nii.ac.jp}
\date{\today}
\begin{document}

\begin{abstract} 
This paper is the first  from  a series of  papers 
 that establish a common analogue of 
 the strong component and  basilica decompositions for bidirected graphs. 
A bidirected graph is a graph in which a sign $+$ or $-$ is assigned to each end of each edge, 
and therefore is a common generalization of digraphs and signed graphs. 
Unlike digraphs, 
the reachabilities between vertices by directed trails and paths 
are not equal in general bidirected graphs. 
In this paper,  
we set up an analogue of the strong connectivity theory for bidirected graphs  
regarding directed trails, motivated by factor theory. 
We define the new concepts of  {\em circular connectivity}  and  {\em circular components} 
as generalizations of the strong connectivity and strong components. 
In our main theorem, 
we characterize the inner structure of each circular component;  
we define a certain binary relation between vertices in terms of the circular connectivity 
and prove that this relation is an equivalence relation. 
The nontrivial aspect of this structure arises from directed trails starting and ending with the same sign, 
and  is therefore characteristic to bidirected graphs that are not digraphs.  
This structure can be considered as an analogue of the general Kotzig-Lov\'asz decomposition, 
a known canonical decomposition in $1$-factor theory. 
From our main theorem, we also obtain a new result in $b$-factor theory, namely, 
 a $b$-factor analogue of the general Kotzig-Lov\'asz decomposition.

\end{abstract} 

\maketitle

\section{Introduction} 
\subsection{Background}
\subsubsection{General Kotzig-Lov\'asz decomposition for $1$-Factors} \label{sec:intro:back:1kl}
The {\em Kotzig-Lov\'asz decomposition}~\cite{kotzig1959a, kotzig1959b, kotzig1960, lovasz1972b, kitacathedral, kita2012partially, DBLP:conf/isaac/Kita12, lp1986, schrijver2003} is a {\em canonical decomposition} in $1$-matching theory~\cite{lp1986}. 
Canonical decompositions of graphs are fundamental tools in $1$-matching theory 
and  have the distinction of being  uniquely determined for each given graph.  
The classical Kotzig-Lov\'asz decomposition~\cite{kotzig1959a, kotzig1959b, kotzig1960, lovasz1972b, lp1986, schrijver2003}  
is a decomposition for {\em $1$-factor connected graphs}, 
a special class of graphs with $1$-factors, 
 and is known for producing many celebrated results, 
such as the two ear theorem and the tight cut lemma~\cite{lp1986, schrijver2003}. 
A general graph with $1$-factors can be considered as 
being built up by joining multiple $1$-factor connected graphs with edges 
under a certain rule; 
each of these component graphs is called a {\em $1$-factor component}. 
Comparably recently, 
the classical Kotzig-Lov\'asz decomposition was generalized 
for arbitrary graphs with $1$-factors~\cite{kitacathedral, kita2012partially, DBLP:conf/isaac/Kita12}; 
we call this the {\em general Kotzig-Lov\'asz decomposition} 
or sometimes just the {\em Kotzig-Lov\'asz decomposition}.
The general Kotzig-Lov\'asz decomposition 
is by itself a canonical decomposition 
and is also a piece of another more comprehensive canonical decomposition, the {\em basilica decomposition}~\cite{kitacathedral, kita2012partially, DBLP:conf/isaac/Kita12}. 
As such, 
the general Kotzig-Lov\'asz decomposition 
has produced new results in $1$-matching theory, 
such as a characterization of maximal barriers in general graphs~\cite{DBLP:conf/cocoa/Kita13, kita2012canonical} and new proofs of Lov\'asz's cathedral theorem for saturated graphs~\cite{kita2014alternative} and the tight cut lemma~\cite{kita2015graph}.

\subsubsection{Bidirected Graphs} 
Bidirected graphs~\cite{schrijver2003} are a common generalization of digraphs and signed graphs. 
A {\em bidirected graph} is a graph 
in which a sign $+$ or $-$ is assigned to each end of each edge. 
A {\em digraph} is a special bidirected graph 
 in which the ends of an edge have distinct signs. 
A {\em signed graph} is a graph in which a single sign is assigned to each edge, 
and  can be considered as a bidirected graph 
in which the ends of each edge have the same sign. 
The concept of bidirected graphs 
is first proposed by Edmonds and Johnson~\cite{Edmonds70matching:a, schrijver2003} 
to provide a general framework that integrates matchings and network flows. 
Various problems,  such as 
capacitated nonsimple $b$-matchings, 
in which $1$-matchings and simple $b$-matchings are included, 
  capacitated $b$-edge covers, 
and  minimum cost flows,  
are given a single unified formulation 
as an optimization problem over bidirected graphs. 
Bidirected graphs have also gained attention in the studies of nowhere-zero integral flows~\cite{bouchet1983nowhere, khelladi1987nowhere, xu2005flows} 
and totally unimodular matrices~\cite{delpia2009thesis, pitsoulis2009representability}. 
Needless to say, numerous studies exist regarding digraphs and signed graphs; 
see, e.g., Schrijver~\cite{schrijver2003} or Zaslavsky~\cite{zaslavsky2012mathematical}.

\subsection{Our Aim} 
We can naturally define a bidirected counterpart of directed paths and trails in digraphs:  
A {\em ditrail} in a bidirected graph 
is a trail such that, for each vertex term $v$, 
the signs of $v$ assigned with respect to $e_1$ and $e_2$ are distinct, 
where $e_1$ and $e_2$ are the edge terms immediately before and after $v$, respectively. 
A {\em dipath} in a bidirected graph can be defined as a ditrail in which no vertex is contained twice or more.

We should however note that 
general bidirected graphs have the following two features 
that digraphs do not possess, 
which make the structure of bidirected graphs rich and complicated. 
First, 
  general bidirected graphs 
 have four types of dipaths or ditrails, 
 whereas digraphs have only two types. 
That is, in digraphs, any dipath or ditrail 
clearly starts with $-$ and ends with $+$, or vice versa. 
In contrast, in bidirected graphs, 
there can be dipaths and ditrails 
that start and end with $-$ and $-$ or $+$ and $+$, 
in addition to  those with $+$ and $-$ or $-$ and $+$. 
Second, 
in bidirected graphs, 
 even if two vertices are connected by a ditrail, 
 it does not necessarily follow that these vertices are  connected by a dipath. 
In digraphs, 
if there is a directed trail from a vertex $u$ to a vertex $v$, 
then clearly there is a directed path from $u$ to $v$; 
however, this property fails for general bidirected graphs.

Thus, in this paper, 
we initiate a bidirected analogue of the strong connectivity theory with respect to ditrails. 
Our motivation for considering ditrails comes from $b$-factor theory~\cite{lp1986, schrijver2003}. 
Given a graph $G$ and a mapping $b$ from the vertex set onto the set of integers, 
a set of edges $F$ is a $b$-factor if the number of edges adjacent to each vertex $v$ is $b(v)$. 
When discussing $b$-factors,  
we are often required to detect ``alternating trails,''  
that is, a trail in a graph along which edges in $F$ and not in $F$ show up alternately, 
where $F$ is a given $b$-factor. 
This task can be considered as a task of detecting ditrails in the signed graph 
generated from $G$ by assigning $-$ and $+$ to each edge in $F$ and not in $F$, respectively. 
Because signed graphs are  special bidirected graphs, 
the theory for ditrails in bidirected graphs has  
implications for $b$-factor theory.

\subsection{Our Results in This Paper} 
\subsubsection{Main Theorem for Bidirected Graphs} 
We introduce the new concepts of  {\em circular connectivity} 
and {\em circular components} of bidirected graphs 
 as  generalizations of the strong connectivity and strong components of digraphs. 
Just as a digraph is made up of its strong components and the edges joining them, 
a bidirected graph is made up of its circular component and the edges joining them. 

In our main theorem,  we obtain the inner structure of each circular component in a bidirected graph. 
This structure is stated by a certain equivalence relation over the vertex set; 
we  first define a certain binary relation, $\ssim{\pm}$, between vertices 
 considering  whether two vertices are connected by a ditrail that starts and ends with the same sign, 
and  then prove that this relation is an equivalence relation, 
the quotient set of which, in fact, provides the inner structure of each circular component. 
The nontrivial aspect of this structure is characteristic to nondigraphic bidirected graphs.  
In  bidirected graphs, 
the vertex set of each circular component can consist of any number of equivalence classes. 
In contrast, for digraphs, this structure is trivial in that 
each equivalence class coincides with the vertex set of a strong component. 
In our subsequent papers, 
we show that these equivalence classes can be considered as the fundamental  units 
for considering the ditrail reachability between vertices. 

\subsubsection{Consequence for $b$-Factor Theory} 
This result for bidirected graphs contains an analogue of the general Kotzig-Lov\'asz decomposition for $b$-factors. 
The original general Kotzig-Lov\'asz decomposition for $1$-factors 
is the quotient set of a certain equivalence relation $\gsim{}$ defined over the vertex set. 
We define a $b$-matching analogue of this equivalence relation, $\gsimb{\pm}{b}$,  
and prove that $\gsimb{\pm}{b}$ is also an equivalence relation in the following way: 
Given a graph $G$ and a $b$-factor $M$, 
create a bidirected graph $G^M$ from $G$ by 
assigning the sign $-$ to every end of every edge in $M$ 
and the sign $+$ to every end of every edge not in $M$.  
It is easily observed that this relation $\gsimb{\pm}{b}$ of $G$ coincides 
with the relation $\ssim{\pm}$ of $G^M$, 
and thus our main result for bidirected graphs immediately proves the claim. 
The counterpart of $1$-factor components for $b$-matchings 
is the concept known as {\em $b$-flexible components}~\cite{kita2016dulmage}. 
The $b$-flexible components of $G$ correspond to the circular components of the auxiliary bidirected graph $G^M$, 
and accordingly,   
this $b$-matching analogue of the general Kotzig-Lov\'asz decomposition in fact 
provides the inner structure of each $b$-flexible component.

Under this result for $b$-factors, 
our result for bidirected graphs can be considered 
as a generalization of the general Kotzig-Lov\'asz decomposition. 
Hence, we name this structure described by $\ssim{\pm}$ 
the {\em general Kotzig-Lov\'asz decomposition} for bidirected graphs.  

Considering the relationship between the original Kotzig-Lov\'asz decomposition and $1$-factor theory, 
we can expect from this new result for $b$-factors 
further new consequences in $b$-factor theory. 
For example, 
the $b$-factor analogues of the two ear theorem and tight cut lemma 
and their further consequences might be derived.

\subsection{Further Consequences} 
This paper is in fact the first  from a series of papers 
that establish the circular connectivity theory of bidirected graphs~\cite{kitascathii, kitascathiii}. 
Just as the strong component decomposition for digraphs 
tells us  how an entire graph is structured from its strong components, 
how an entire bidirected graph is made up of its circular components 
will be revealed in our subsequent papers. 
As is also the case for digraphs, 
this entire structure can be stated 
in terms of a partial order between circular components. 
However, 
the structure will be again much  richer and more complicated 
for bidirected graphs. 

Here, our general Kotzig-Lov\'asz decomposition for bidirected graphs, 
the inner structure of each circular component, 
 will turn out to be related to the entire graph structure.

This whole theory of circular connectivity provides a bidirected analogue of the basilica decomposition 
mentioned in Section~\ref{sec:intro:back:1kl}. 
The original basilica decomposition for $1$-factors is a canonical decomposition applicable 
for any graph with $1$-factors, 
and consists of three main concepts regarding $1$-factor components: 
the general Kotzig-Lov\'asz decomposition, 
which provides the inner structure of each $1$-factor component; 
the basilica order, which is a partial order between $1$-factor components; 
and the relationship between the two. 
In our circular connectivity theory, 
analogues of these concepts are provided regarding circular components. 
This theory can be considered as a common generalization of 
the strong component decomposition and basilica decomposition. 
As is also the case in this paper, 
this theory derives a new result for $b$-factors, that is, 
the $b$-factor analogue of the basilica decomposition.

The general Kotzig-Lov\'asz decomposition for bidirected graphs can be computed in polynomial time. 
The algorithm will be introduced in another paper by us~\cite{kitascathiii}.

\subsection{Organization of Paper} 
The remainder of this paper is organized as follows. 
In Section~\ref{sec:definition}, 
we explain basic notation and definitions used in this paper. 
In Section~\ref{sec:vs}, 
we make some remarks about the disparities between digraphs and general bidirected graphs regarding dipaths and ditrails. 
In Section~\ref{sec:circomp}, 
we introduce the concept of circular connectivity of bidirected graphs,  
and show that this is a generalization of the strong connectivity of digraphs.  
In Section~\ref{sec:skl}, 
we prove our main result, 
the analogue of  general Kotzig-Lov\'asz decomposition for bidirected graphs. 
In Section~\ref{sec:bkl}, 
we show that the result in Section~\ref{sec:skl} 
easily derives a $b$-factor analogue of a known result  
in $1$-factor theory, i.e., the general Kotzig-Lov\'asz decomposition.

\section{Notation} \label{sec:definition} 
\subsection{Graphs} 
For basic notation, we mostly follow Schrijver~\cite{schrijver2003}. 
In the following, we list exceptions or  nonstandard definitions that are used. 
Let $G$ be an (undirected) graph. 
The vertex set and edge set of $G$ are denoted by $V(G)$ and $E(G)$, respectively. 
Let $X\subseteq V(G)$. 
The set of edges joining $X$ and $V(G)\setminus X$ 
is denoted by $\parcut{G}{X}$. 
The subgraph of $G$ induced by $X$ is denoted by $G[X]$. 
The graph $G[V(G)\setminus X]$ is denoted by $G-X$. 
As usual, we often denote a singleton $\{x\}$ by $x$.

Let $u,v\in V(G)$. 
A {\em walk} from $u$ to $v$ is a sequence 
$(s_1, \ldots, s_k)$, 
where $k$ is an odd number with $k\ge 1$, 
such that
\begin{rmenum} 
\item 
for each odd $i\in \{1, \ldots, k\}$, 
$s_i$ is a vertex of $G$, for which $s_1 = u$ and $s_k = v$, and 
\item 
for each even $i\in \{1, \ldots, k\}$, 
$s_i$ is an edge of  $G$ that connects $s_{i-1}$ and $s_{i+1}$. 
\end{rmenum} 
Vertices and edges from a walk may not be distinct. 
A {\em trail} is a walk whose edges are all distinct. 
Let $W$ be a walk $(s_1, \ldots, s_k)$. 
We say that $W$ is {\em closed } if $s_1 = s_k$. 
For  vertices $s_i$ and $s_j$ from $W$ with $i \le j$, 
$s_iWs_j$ denote the walk $(s_i, \ldots, s_j)$. 
The {\em reverse walk} of $W$ 
is the sequence $(s_k, \ldots, s_1)$ 
and is denoted by $\reverse{W}$. 
Let $U$ be another walk $(s_k,\ldots, s_l)$, where $k\le l$. 
Then, $W + U$ 
denotes the walk $(s_1,\ldots, s_k, \ldots, s_l)$, 
namely, the concatenation of $W$ and $U$.

\subsection{Bidirected Graphs} 
A {\em bidirected graph} is a graph in which 
each end of each edge is assigned a sign $+$ or $-$. 
The precise definition is as follows: 
A {\em bidirected graph} is a graph $G$ endowed 
with two mappings $\signfs{+}$ and $\signfs{-}$ over $E(G)$ 
such that, 
for each $e\in E(G)$ with ends $u$ and $v$,  
\begin{rmenum} 
\item $\signfse{+}{e}$ and $\signfse{-}{e}$ 
are subsets of $\{ u, v\}$ one of which can be empty, 
\item $\signfse{+}{e} \cup \signfse{-}{e} = \{u, v\}$, and 
\item if $u\neq v$ then $\signfse{+}{e} \cap \signfse{-}{e} = \emptyset$. 
\end{rmenum}
We say that $\alpha\in \{+, -\}$  is a {\em sign of $u\in V(G)$ over $e \in E(G)$} 
if $u \in \signfse{\alpha}{e}$ holds. 
Bidirected graphs are a generalization of (usual) digraphs. 
A {\em digraph} is a bidirected graph 
in which $|\signfse{+}{e} |  = |\signfse{-}{e} | = 1$ for every edge $e$; 
we call a bidirected graph with this property {\em digraphic}.  
We use this adjective when we wish to note that we are considering  a digraph as a special bidirected graph.

Let $G$ be a bidirected graph. 
Let $W$ be a trail $(s_1, \ldots s_k)$, where $k \ge 1$. 
We call $W$ a {\em ditrail} 
if, for each odd $i \in \{1, \ldots, k\} \setminus \{1, k\}$, 
the signs of the vertex $s_i$ over the edges $s_{i-1}$ and $s_{i+1}$ are distinct. 
We call $W$ an $(\alpha, \beta)$-ditrail, where $\alpha, \beta \in \{+, -\}$, 
if $k > 1$ holds, and $W$ is a ditrail in which the sign of $s_1$ over $s_2$ is $\alpha$, 
whereas the sign of $s_k$ over $s_{k-1}$ is $\beta$. 
If $k = 1$, we also define $W$ as an $(\alpha, \beta)$-ditrail for any $\alpha, \beta \in \{+ ,-\}$ 
with $\alpha \neq \beta$. 
Note that if $W$ is an $(\alpha, \beta)$-ditrail, 
then $W^{-1}$ is a $(\beta, \alpha)$-ditrail. 
We call $W$ a {\em cyclic ditrail} 
if $W$ is a closed $(\alpha, -\alpha)$-ditrail for some $\alpha\in\{+, -\}$. 
We call a ditrail a {\em dipath} 
if no vertex is contained more than once. 

\section{Digraphs Versus Bidirected Graphs} \label{sec:vs}

In this section, we make some remarks 
 comparing digraphs and nondigraphic bidirected graphs regarding ditrails or dipaths. 
Note  the following  observations for digraphic bidirected graphs. 

\begin{observation} \label{obs:trail2path}
Let $G$ be a digraphic bidirected graph, and let $\alpha, \beta\in\{+, -\}$. 
Then, for any $s,t\in V(G)$, 
there is an $(\alpha, \beta)$-ditrail from $s$ to $t$ 
if and only if 
there is an $(\alpha, \beta)$-dipath from $s$ to $t$. 
\end{observation}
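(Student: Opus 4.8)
The plan is to prove the nontrivial direction: every $(\alpha,\beta)$-ditrail in a digraphic bidirected graph $G$ can be ``shortcut'' to an $(\alpha,\beta)$-dipath with the same endpoints and endsigns. The converse is immediate, since a dipath is by definition a ditrail. The key observation making this work is that in a digraphic bidirected graph every edge $e$ with ends $u,v$ has $|\signfse{+}{e}|=|\signfse{-}{e}|=1$, so each edge has a well-defined ``tail'' (the end with sign $-$, say) and ``head'' (the end with sign $+$); consequently the ditrail condition at an internal vertex $s_i$ forces that $s_i$ is entered and left consistently, exactly as in an ordinary directed walk. In other words, a ditrail in $G$ is nothing but a directed walk in the underlying digraph $\vec{G}$, and an $(\alpha,\beta)$-ditrail records in addition the orientation at the two endpoints.

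First I would make this correspondence precise: fix the convention that sign $-$ at an end means ``tail'' and sign $+$ means ``head,'' and check that $W=(s_1,\dots,s_k)$ is a ditrail in $G$ if and only if the sequence of edges $s_2,s_4,\dots$ traversed from $s_1$ to $s_k$ is a directed walk in $\vec{G}$; moreover $W$ is an $(\alpha,\beta)$-ditrail precisely when the first edge $s_2$ leaves $s_1$ in direction dictated by $\alpha$ and the last edge $s_{k-1}$ enters $s_k$ in direction dictated by $\beta$ (with the $k=1$ degenerate case handled by the definition, which already allows any $\alpha\neq\beta$). Once this dictionary is in place, the statement reduces to the classical fact that a directed walk from $s$ to $t$ in a digraph contains a directed path from $s$ to $t$.

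Next I would carry out the standard shortcutting argument, but phrased so that the endsigns are preserved. Given an $(\alpha,\beta)$-ditrail $W$ from $s$ to $t$, if some vertex $v$ occurs at positions $i<j$ (as $s_i=s_j=v$), replace $W$ by $W':=s_1Ws_i + s_jWs_k$, using the concatenation notation from Section~\ref{sec:definition}. One checks that $W'$ is again a ditrail: the only new junction is at $v$, and there the incoming edge is $s_{i-1}$ and the outgoing edge is $s_{j+1}$, whose signs at $v$ are opposite by the ditrail property of $W$ at positions $i$ and $j$ (or the junction involves an original endpoint, where no constraint applies). Crucially, $s_2$ and $s_{k-1}$—hence the signs $\alpha$ at $s$ and $\beta$ at $t$—are untouched unless the repeated vertex is $s$ itself or $t$ itself; in those cases one simply deletes the intermediate loop, which again preserves the first and last edges (or, if $s=t$ and the whole thing collapses, we land in the $k=1$ case, legitimate only when $\alpha\neq\beta$, which is exactly the digraphic situation since a closed $(\alpha,\beta)$-ditrail with $\alpha=\beta$ cannot exist in a digraph). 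Since $W'$ is strictly shorter, iterating terminates in an $(\alpha,\beta)$-dipath.

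The main obstacle is not the combinatorics of shortcutting, which is routine, but rather setting up the edge-to-arc dictionary cleanly enough that the ``$\alpha$ and $\beta$ are preserved'' bookkeeping is transparent and the degenerate $k=1$ boundary case is seen to be consistent. In particular one must be careful that, in a digraphic bidirected graph, a closed ditrail is automatically a $(+,-)$- or $(-,+)$-ditrail and never an $(\alpha,\alpha)$-ditrail, so that collapsing a loop at an endpoint never produces an inadmissible degenerate dipath; this is precisely the feature that fails for general bidirected graphs and is the reason the observation is restricted to the digraphic case.
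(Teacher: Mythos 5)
The paper states this as an \emph{Observation} and provides no proof, implicitly treating it as the classical fact that a directed walk in a digraph contains a directed path with the same endpoints; your argument supplies exactly that routine justification (the dictionary identifying ditrails in a digraphic bidirected graph with directed walks, followed by shortcutting at repeated vertices) and is correct, including the careful handling of the degenerate $k=1$ collapse at a repeated endpoint. One wording quibble: when you shortcut at an internal repeated vertex $v$, the fact that the signs of $v$ over $s_{i-1}$ and $s_{j+1}$ are opposite follows from the consistent orientation forced by the digraphic dictionary you set up in your first paragraph, not merely from the ditrail conditions at positions $i$ and $j$ as your parenthetical suggests --- and this is precisely the point at which the same shortcut fails for general bidirected graphs, as you correctly observe at the end.
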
 

That is, in considering the strong connectivity of digraphs, 
we are not required to distinguish between ditrails and dipaths. 
However, 
for general bidirected graphs, 
the connectivities by ditrails and  by dipaths are distinct; 
clearly, 
two vertices connected by $(\alpha, \beta)$-ditrails 
do not necessarily imply that they are connected by  $(\alpha, \beta)$-dipaths. 
In this paper, we study the connectivity of bidirected graphs by ditrails. 
As we will see in Section~\ref{sec:bkl},  
this study has implications for factor theory. 

\begin{observation} \label{obs:balance} 
In a digraphic bidirected graph,  any ditrail is a $(-, +)$- or $(+, -)$-ditrail. 
\end{observation}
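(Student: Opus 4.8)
The plan is to follow the signs along the trail from one end to the other and observe that two elementary local rules force them into a fixed alternating pattern. Let $W = (s_1, \dots, s_k)$ be a ditrail in a digraphic bidirected graph $G$. If $k = 1$, then by the convention in Section~\ref{sec:definition} $W$ is an $(\alpha, \beta)$-ditrail for every pair $\alpha \neq \beta$, and in particular it is both a $(+,-)$- and a $(-,+)$-ditrail, so there is nothing to prove. Hence assume $k > 1$; since $k$ is odd, the edge terms of $W$ are exactly $s_2, s_4, \dots, s_{k-1}$ and the internal vertex terms are exactly $s_3, s_5, \dots, s_{k-2}$ (an empty list when $k = 3$).

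Next I would record the two rules. First, the \emph{edge rule}: for each edge term $s_i$ (even $i$), with ends $s_{i-1}$ and $s_{i+1}$, the digraphic hypothesis $|\signfse{+}{s_i}| = |\signfse{-}{s_i}| = 1$ forces the sign of $s_{i-1}$ over $s_i$ and the sign of $s_{i+1}$ over $s_i$ to be distinct. Second, the \emph{vertex rule}: for each internal vertex term $s_i$ (odd $i$, $1 < i < k$), the definition of a ditrail forces the sign of $s_i$ over $s_{i-1}$ and the sign of $s_i$ over $s_{i+1}$ to be distinct.

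Then I would chain these rules together. Set $\alpha$ to be the sign of $s_1$ over $s_2$, and follow the sequence of signs obtained by alternately applying the edge rule at $s_2$, the vertex rule at $s_3$, the edge rule at $s_4$, and so on, finishing with the edge rule at $s_{k-1}$, which reads off the sign of $s_k$ over $s_{k-1}$. Since $\{+,-\}$ has only two elements, ``distinct'' means ``flipped'', so every step in this sequence flips the current sign; there is one step per edge term ($\frac{k-1}{2}$ of them) and one step per internal vertex term ($\frac{k-3}{2}$ of them), hence $k-2$ flips in all. As $k$ is odd, $k-2$ is odd, so the sign of $s_k$ over $s_{k-1}$ equals $-\alpha$. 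Thus $W$ is an $(\alpha, -\alpha)$-ditrail, i.e.\ a $(+,-)$-ditrail or a $(-,+)$-ditrail, as claimed.

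I do not expect a genuine obstacle here: the only things to be careful about are the parity bookkeeping in the chaining step and not overlooking the degenerate case $k = 1$. If one prefers to avoid counting flips explicitly, the chaining step can instead be carried out as a short induction on $k$, removing the last two terms $s_{k-1}, s_k$ and invoking the edge rule once.
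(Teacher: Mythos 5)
Your argument is correct, and the paper offers no proof at all for this Observation, treating it as immediate; your edge-rule/vertex-rule parity chaining is exactly the intended justification, and the flip count ($k-2$ flips, odd since $k$ is odd) checks out. The only unaddressed corner case is a directed loop, where the single end carries both signs and ``the sign of $s_{i-1}$ over $s_i$'' is ambiguous, but the paper itself glosses over loops throughout, so this is not a gap relative to its standards.
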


That is, in the strong connectivity theory of digraphs, 
we need only consider $(-, +)$-ditrails between two vertices. 
In contrast, in general digraphs, there are also $(-, -)$- and $(+, +)$-ditrails. 
This variety that is peculiar to nondigraphic bidirected graphs 
 leads to  a new structure that we introduce in the following sections.

\section{Circularly Connected Components of Bidirected Graph} \label{sec:circomp}

We now introduce the new concept of {\em circular connectivity} in bidirected graphs 
and prove that this is a generalization of the strong connectivity in digraphs.  
In this section, unless stated otherwise, let $G$ be a bidirected graph 
with respect to ditrails. 

\begin{definition} 
We say that an edge $e\in E(G)$ is {\em circular} 
if there is a cyclic ditrail that contains $e$. 
We say that  vertices $u$ and $v$ are {\em circularly connected} 
if there is a path between $u$ and $v$ whose edges are all circular. 
A bidirected graph is {\em circularly connected} if every two vertices are circularly connected. 
A {\em circularly connected component} or {\em circular component} of $G$ 
is a maximal circularly connected subgraph of $G$.  
\end{definition} 

An alternative way to define a circular component of $G$ 
is as follows: 
Let $F \subseteq E(G)$ be the set of circular edges of $G$. 
A circular component is a subgraph of the form $G[V(C)]$, 
where $C$ is a connected component of the subgraph of $G$ determined by $F$. 
A bidirected graph consists of its circular components,
 which are disjoint,  
and edges joining distinct circular components, which are not circular.

The circular connectivity of bidirected graphs 
is a generalization of the strong connectivity of  digraphs. 
In a digraph, two vertices $u$ and $v$ are {\em strongly connected} 
if there are $(-, +)$-dipaths from $u$ to $v$ and from $v$ to $u$.  
A maximal strongly connected subgraph is called a {\em strongly connected component} 
or {\em strong component}.  
The next statement is a basic fact.  
\begin{fact}[see, e.g., Schrijver~\cite{schrijver2003}] \label{fact:strong2disjoint}
No two distinct strong components share vertices. 
Accordingly, a digraph is made up of its strong components, which are mutually disjoint, 
and the edges joining them. 
\end{fact}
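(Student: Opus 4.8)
The plan is to prove Fact~\ref{fact:strong2disjoint} by showing that strong connectivity, restricted to a digraph, is an equivalence relation on $V(G)$, so that the strong components are exactly its equivalence classes and are therefore pairwise disjoint. Reflexivity is immediate by taking the length-one walk at a vertex (a $(-,+)$-dipath in the degenerate sense allowed by our definition of an $(\alpha,\beta)$-ditrail when $k=1$), and symmetry follows because the reverse of a $(-,+)$-dipath from $u$ to $v$ is a $(+,-)$-dipath from $v$ to $u$, which in a digraphic bidirected graph is the same thing as a $(-,+)$-dipath read the other way; more carefully, by Observation~\ref{obs:balance} every ditrail between two vertices in a digraph is a $(-,+)$- or $(+,-)$-ditrail, so ``strongly connected'' is insensitive to which of these two we demand. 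The only substantive point is transitivity.

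For transitivity, suppose $u$ is strongly connected to $v$ and $v$ to $w$. Concatenating a $(-,+)$-dipath $P$ from $u$ to $v$ with a $(-,+)$-dipath $Q$ from $v$ to $w$ gives a closed-up walk $P+Q$ from $u$ to $w$; at the junction vertex $v$ the incoming sign (from $P$) is $+$ and the outgoing sign (from $Q$) is $-$, which are distinct, so $P+Q$ is a $(-,+)$-ditrail from $u$ to $w$ provided its edges are all distinct — and here is where I would invoke the digraphic hypothesis together with Observation~\ref{obs:trail2path}: even if $P$ and $Q$ share edges or vertices, $P+Q$ is still a walk, and by standard shortcutting (repeatedly deleting the portion between two occurrences of a repeated vertex, which in a digraph preserves the alternation of signs at every internal vertex because each vertex has a unique $+$-end and a unique $-$-end at each incident edge) we extract an $(\alpha,\beta)$-ditrail, hence by Observation~\ref{obs:trail2path} an $(\alpha,\beta)$-dipath, from $u$ to $w$; by Observation~\ref{obs:balance} this is a $(-,+)$- or $(+,-)$-dipath, and by symmetry either one suffices. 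Running the same argument on the reverse walks gives a dipath from $w$ to $u$, so $u$ is strongly connected to $w$.

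Having established that strong connectivity is an equivalence relation, a strongly connected subgraph is precisely one whose vertex set lies in a single equivalence class, and a maximal such subgraph is the subgraph induced on a whole class together with all edges of $G$ within that class; hence distinct strong components have disjoint vertex sets. The second sentence of the Fact — that the digraph is made up of its strong components plus the edges joining them — is then just the observation that every vertex lies in exactly one class and every edge either lies inside one class or joins two different ones.

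The main obstacle is the transitivity step, specifically the bookkeeping that guarantees the concatenation can be reduced to a genuine ditrail (distinct edges) while keeping the correct start and end signs. In the digraphic setting this is routine — it is exactly the classical ``directed walk implies directed path'' argument, repackaged through Observations~\ref{obs:trail2path} and~\ref{obs:balance} — but it is worth stating carefully precisely because, as the paper emphasizes, the analogous shortcutting fails for general bidirected graphs, where a directed trail need not yield a directed path and where $(-,-)$- and $(+,+)$-ditrails appear. So the proof is short, but the place to be careful is in citing the two Observations correctly rather than re-deriving the shortcutting by hand.
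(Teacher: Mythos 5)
The paper offers no proof of this Fact at all---it is stated as a classical result with a pointer to Schrijver---so there is no in-paper argument to compare against; your proof is the standard one (strong connectivity is an equivalence relation, via concatenation and shortcutting for transitivity) and it is correct. Two small trims are possible: symmetry needs no argument, since the paper's definition of strong connectivity already demands dipaths in both directions and is therefore symmetric in $u$ and $v$ by inspection; and in the transitivity step you can shortcut the concatenated walk directly to a dipath rather than passing through a ditrail first, though your route through Observations~\ref{obs:trail2path} and~\ref{obs:balance} is also valid and correctly isolates the place where the digraphic hypothesis is used and where the argument fails for general bidirected graphs.
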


Under Observation~\ref{obs:trail2path}, 
two vertices $u$ and $v$ are strongly connected if and only if 
there are $(-, +)$-ditrails from $u$ to $v$ and from $v$ to $u$. 
Accordingly, 
\begin{quote}
an edge $uv$ is circular 
if and only if $u$ and $v$ are strongly connected. 
\end{quote}
This further implies the following statement. 

\begin{observation} 
Let $G$ be a digraphic bidirected graph. 
Let $u,v\in V(G)$. 
Then, $u$ and $v$ are strongly connected if and only if $u$ and $v$ are circularly connected. 
\end{observation}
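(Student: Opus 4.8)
The plan is to reduce the statement to the displayed equivalence ``an edge $uv$ is circular iff $u$ and $v$ are strongly connected,'' which has just been established, and then to push this from single edges to paths. Concretely, I would argue: $u$ and $v$ are circularly connected iff there is a path $P$ from $u$ to $v$ all of whose edges are circular; by the displayed equivalence this holds iff every edge $xy$ of $P$ has its endpoints strongly connected; and this in turn is equivalent to $u$ and $v$ themselves being strongly connected. So the whole statement comes down to one clean fact about strong connectivity in digraphs.

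First I would handle the easy direction. If $u$ and $v$ are circularly connected, take a path $P = (u = x_0, e_1, x_1, \ldots, e_m, x_m = v)$ with every $e_i$ circular. By the displayed equivalence, each consecutive pair $x_{i-1}, x_i$ is strongly connected. Since strong connectivity is an equivalence relation on $V(G)$ (this is immediate: the relation ``there are $(-,+)$-dipaths in both directions'' is reflexive via the trivial one-vertex dipath, symmetric by definition, and transitive by concatenating dipaths and noting that in a digraphic bidirected graph the concatenation of a $(-,+)$-ditrail with a $(-,+)$-ditrail is a $(-,+)$-ditrail, hence contains a $(-,+)$-dipath by Observation~\ref{obs:trail2path}), transitivity along the chain $x_0, x_1, \ldots, x_m$ gives that $u$ and $v$ are strongly connected.

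For the converse, suppose $u$ and $v$ are strongly connected. I want a path from $u$ to $v$ through circular edges. By Fact~\ref{fact:strong2disjoint} and the definition, $u$ and $v$ lie in the same strong component $C$, so $G[V(C)]$ is connected (a strong component is in particular connected as an undirected graph); pick any path $P$ from $u$ to $v$ inside $G[V(C)]$. Each edge $xy$ of $P$ has both endpoints in $V(C)$, hence $x$ and $y$ are strongly connected, hence by the displayed equivalence $xy$ is circular. Thus $P$ is a path through circular edges, so $u$ and $v$ are circularly connected.

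The only point requiring a little care is the claim that a strong component, viewed as an undirected subgraph, is connected, so that a path between two of its vertices exists that stays inside it — but this is standard and follows because any two vertices of a strong component are joined by a $(-,+)$-dipath, which is in particular an undirected path within the component. I expect this to be the main (very mild) obstacle; everything else is a direct application of the displayed equivalence together with the equivalence-relation property of strong connectivity, which itself leans only on Observation~\ref{obs:trail2path}.
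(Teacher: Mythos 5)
Your proof is correct and follows exactly the route the paper intends: the paper states this observation as an immediate consequence of the displayed equivalence ``an edge $uv$ is circular iff $u$ and $v$ are strongly connected,'' and you simply fill in the details (transitivity of strong connectivity along a path of circular edges in one direction, and a path inside the common strong component in the other). The only nitpick is your parenthetical claim that concatenating two $(-,+)$-ditrails yields a ditrail --- edges could repeat, so one should instead say the concatenation is a directed walk from which a dipath can be extracted in the standard digraph way --- but this does not affect the argument.
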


That is, 
the circular connectivity of bidirected graphs is a generalization of 
the strong connectivity of digraphs, 
and circular components are a generalization of strong components.

\section{General Kotzig-Lov\'asz Decomposition for Bidirected Graphs} \label{sec:skl}

In this section, we prove the main result of this paper: 
the analogue of the general Kotzig-Lov\'asz decomposition for bidirected graphs. 
In this section, unless stated otherwise, 
let $G$ be a bidirected graph and let $\alpha \in \{ +, -\}$. 
In the following, we define a binary relation $\ssim{\alpha}$ and then prove in Theorem~\ref{thm:skl} 
that this relation is an equivalence relation,  
the quotient set of which in fact provides the inner structure of each circular component. 

\begin{definition} 
Define a binary relation $\ssim{\alpha}$ over $V(G)$ as follows: 
For $u, v\in V(G)$, 
we let $u\ssim{\alpha} v$ if $u$ and $v$ are identical or if 
$u$ and $v$ are circularly connected 
and there is no $(\alpha, \alpha)$-ditrail between $u$ and $v$. 
\end{definition} 

We prove in the following that $\ssim{\alpha}$ is an equivalence relation.

\begin{proposition} \label{prop:reach}
Let $G$ be a circularly connected bidirected graph, 
let $\alpha\in \{+, -\}$, and let $s\in V(G)$. 
Then, for any $t\in V(G)$, 
there exists $\beta\in \{+, -\}$ 
such that $G$ has an $(\alpha, \beta)$-ditrail  from $s$ to $t$. 
\end{proposition}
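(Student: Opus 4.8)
The goal is to show that from a fixed vertex $s$, every vertex $t$ in a circularly connected bidirected graph is reachable by a ditrail that \emph{starts} with the prescribed sign $\alpha$ (the terminal sign $\beta$ being unconstrained). The natural strategy is induction on the length of a circular-edge path from $s$ to $t$, using the fact that every circular edge lies on a cyclic ditrail. First I would handle the base case $t = s$: since $G$ is circularly connected and (one should check) has at least one edge incident to $s$ unless $G$ is a single vertex, there is a cyclic ditrail through some circular edge at $s$; by reading this closed $(\gamma, -\gamma)$-ditrail starting from $s$ in one of its two directions, one obtains a (possibly trivial, possibly nontrivial) ditrail from $s$ to $s$ beginning with $\alpha$ — and if $G$ is a single vertex the trivial trail $(s)$ already qualifies as an $(\alpha,\beta)$-ditrail for $\alpha\neq\beta$.

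\textbf{Inductive step.} Suppose $t$ is joined to $s$ by a path $P$ of circular edges of length $n\ge 1$; let $t'$ be the neighbor of $t$ on $P$, so $s$ and $t'$ are joined by a circular-edge path of length $n-1$. By induction there is an $(\alpha,\gamma)$-ditrail $Q$ from $s$ to $t'$ for some $\gamma\in\{+,-\}$. The edge $e = t't$ is circular, so it lies on a cyclic ditrail $C$; traversing $C$ appropriately gives a ditrail $R$ from $t'$ to $t$ whose first edge is $e$, and — crucially — because $C$ is a closed $(\delta,-\delta)$-ditrail, by choosing the direction of traversal and the starting point I can arrange the sign of $t'$ over $e$ at the start of $R$ to be \emph{either} $+$ or $-$, as I wish; in particular I can make it equal to $-\gamma$. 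Then concatenating $Q$ and $R$ at $t'$ respects the ditrail sign condition at $t'$ (signs $\gamma$ and $-\gamma$ are distinct), so $Q + R$ is a walk from $s$ to $t$ that alternates signs correctly at every internal vertex. The first sign is still $\alpha$, as desired.

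\textbf{The main obstacle.} The concatenation $Q + R$ need not be a \emph{trail}: $Q$ and $R$ may share edges, so the concatenation may repeat an edge and thus fail to be a ditrail. This is exactly the phenomenon flagged in Section~\ref{sec:vs} that distinguishes bidirected graphs from digraphs, and it is where the real work lies. The fix is a standard shortcutting argument adapted to the sign constraints: if $Q + R$ reuses an edge, locate the first repeated edge and excise the closed portion of the walk between its two occurrences; one must check that after excision the sign-alternation condition still holds at the splice point (this uses that the two occurrences of a repeated edge $f = xy$ in a ditrail assign opposite signs to, or can be matched up at, the relevant endpoint — care is needed here and this is the delicate case analysis). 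Iterating until no edge repeats yields a genuine ditrail from $s$ to $t$ still beginning with $\alpha$. Alternatively, and perhaps more cleanly, one can phrase the induction so that $Q$ is chosen to share as few edges as possible with the ambient structure, or build the ditrail edge-by-edge maintaining a trail invariant; either way, guaranteeing the trail (no repeated edge) property while controlling the starting sign is the crux of the proof.
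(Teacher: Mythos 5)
There are two genuine gaps here. The first is in your inductive step: you claim that, because the cyclic ditrail $C$ through $e=t't$ is a closed $(\delta,-\delta)$-ditrail, you can ``arrange the sign of $t'$ over $e$ at the start of $R$ to be either $+$ or $-$.'' You cannot. The sign of $t'$ over $e$ is part of the data of the bidirected graph (whether $t'\in\signfse{+}{e}$ or $t'\in\signfse{-}{e}$) and is independent of how $C$ is traversed. What is actually available, and what the paper uses, is that $C$ offers two ditrails from $t'$ to $t$ with \emph{opposite starting signs}: the single edge $e$ itself, whose starting sign is the fixed sign $\sigma$ of $t'$ over $e$, and the complementary arc $C-e$, which is a $(-\sigma,-\tau)$-ditrail from $t'$ to $t$ (where $\tau$ is the sign of $t$ over $e$). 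One of the two always starts with $-\gamma$, but the hard case is exactly when you are forced onto the long arc, and that arc may share many edges with $Q$.

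The second gap is the one you yourself defer as ``delicate,'' and it does not go through as described. Excising ``the closed portion between the two occurrences'' of a repeated edge $f=xy$ fails when the two occurrences are traversed in opposite directions: writing the walk as $A+(x,f,y)+B+(y,f,x)+C$, the ditrail condition forces the last edge of $A$ and the first edge of $C$ to \emph{both} carry the sign $-\sigma$ at $x$ (where $\sigma$ is the sign of $x$ over $f$), so the spliced walk $A+C$ violates the sign condition at $x$, and no excision anchored at $f$ obviously yields a di-walk reaching $t$. The paper sidesteps this entirely by not inducting along a fixed path: it takes $S$ to be the set of vertices reachable from $s$ by an $(\alpha,\cdot)$-ditrail, picks a circular edge $uv\in\parcut{G}{S}$ with $u\in S$ and $v\notin S$ (so the witnessing ditrail $P$ to $u$ avoids $v$, hence avoids the edge $uv$), and, when $P$ meets the complementary arc $Q$ of the cyclic ditrail, splices at the \emph{first} edge of $P$ lying in $E(Q)$, riding $Q$ either forward to $v$ or backward to $u$ and then across $uv$. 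Since $P$ and $Q$ are each trails and the splice point precedes all sharing along $P$, the result repeats no edge, and the sign condition at the splice holds automatically because the two continuations of $Q$ out of the shared edge leave its endpoint with opposite signs. Your induction could probably be repaired along these lines, but as written both the sign claim and the excision step are incorrect, and the excision step is precisely the crux you leave unproved.
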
 
\begin{proof} 
Define $S\subseteq V(G)$ as follows: 
Let $x\in S$ if there exists $\beta \in \{+, -\}$ 
such that $G$ has an $(\alpha, \beta)$-ditrail from $s$ to $x$. 
We prove $S = V(G)$ in the following. 
Suppose, to the contrary, $S \subsetneq V(G)$. 
Because $G$ is circularly connected, 
there is a circular edge $uv\in \parcut{G}{S}$, where $u\in S$ and $v\in V(G)\setminus S$. 
Because $u\in S$ holds, 
there is an $(\alpha, \beta)$-ditrail $P$ from $s$ to $u$, where $\beta$ is either $+$ or $-$. 
Because $v\not\in S$ holds, the vertex $v$ is not contained in $P$, 
and therefore $P + (u, uv, v)$ is a trail from $s$ to $v$. 
This further implies that the sign of $u$ over the edge $uv$ is $\beta$. 
Let $\gamma$ be the sign of $v$ over $uv$.  
Because $uv$ is circular, 
there is a $(-\beta, -\gamma)$-ditrail $Q$ from $u$ and $v$. 
If $P$ and $Q$ do not share an edge, 
then $P + Q$ is an $(\alpha, -\gamma)$-ditrail from $s$ to $v$, 
 which implies $v\in S$; this is a contradiction.  
Hence, consider now the case in which $P$ and $Q$ share edges. 
Trace $P$ from $s$, and let $wz$ be the first encountered edge shared by $E(Q)$. 
Without loss of generality, let $w$, $wz$, and $z$ appear in this order over $P$. 
First, assume that $w$, $wz$, and $z$ also appear in this order over $Q$. 
Then, $sPz + zQv$ is an $(\alpha, -\gamma)$-ditrail from $s$ to $v$, 
that is, $v\in S$ holds, which is a contradiction. 
Assume now that $z$, $zw$, and $w$ appear in this order over $Q$. 
Then, $sPz + z\reverse{Q}u$ is an $(\alpha, -\beta)$-ditrail from $s$ to $u$. 
Therefore, $sPz + z\reverse{Q}u + (u, uv, v)$ is an $(\alpha, \gamma)$-ditrail from $s$ to $v$, 
which is again a contradiction. 
This completes the proof. 
\end{proof}

\begin{remark} 
The value of $\beta$ in Proposition~\ref{prop:reach} is not exclusive. 
That is, for two vertices $s$ and $t$, 
there may be both $(\alpha, +)$- and $(\alpha, -)$-ditrails from $s$ to $t$. 
\end{remark}

\begin{theorem} \label{thm:skl} 
Let $G$ be a bidirected graph, 
and let $\alpha\in \{+, -\}$. 
Then, $\ssim{\alpha}$ is an equivalence relation over $V(G)$. 
\end{theorem}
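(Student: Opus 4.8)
The plan is to verify the three axioms of an equivalence relation for $\ssim{\alpha}$. Reflexivity is immediate from the definition, since $u \ssim{\alpha} u$ is built in. Symmetry is also essentially free: circular connectedness is a symmetric relation (a path between $u$ and $v$ with all circular edges reverses to one between $v$ and $u$), and ``there is no $(\alpha,\alpha)$-ditrail between $u$ and $v$'' is phrased symmetrically; alternatively, if $W$ is an $(\alpha,\alpha)$-ditrail from $u$ to $v$, then $\reverse{W}$ is an $(\alpha,\alpha)$-ditrail from $v$ to $u$. So the whole content of the theorem is transitivity.

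For transitivity, suppose $u \ssim{\alpha} v$ and $v \ssim{\alpha} w$ with $u, v, w$ pairwise distinct (the cases where two of them coincide are trivial). First, $u$ and $w$ are circularly connected, since concatenating a circular-edge path from $u$ to $v$ with one from $v$ to $w$ and extracting a path yields a circular-edge path from $u$ to $w$ — here one should note that circular connectedness is genuinely transitive because the union of the edge sets of two circular-edge paths lies in the circular-edge subgraph, and connectivity within a fixed subgraph is transitive. It remains to show there is no $(\alpha,\alpha)$-ditrail between $u$ and $w$. I would argue by contradiction: assume $R$ is an $(\alpha,\alpha)$-ditrail from $u$ to $w$. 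The idea is to combine $R$ with ditrails emanating from $v$ to produce a forbidden $(\alpha,\alpha)$-ditrail between $u$ and $v$ or between $v$ and $w$, contradicting one of the hypotheses. Since $u,v,w$ all lie in one circular component (we may pass to it), Proposition~\ref{prop:reach} gives, from $v$, an $(\alpha,\beta)$-ditrail $P$ to $u$ for some $\beta$, and an $(\alpha,\gamma)$-ditrail to $w$ for some $\gamma$. Because $u \ssim{\alpha} v$ forbids an $(\alpha,\alpha)$-ditrail between them, we must have $\beta = -\alpha$, i.e. $P$ is an $(\alpha, -\alpha)$-ditrail from $v$ to $u$; similarly the ditrail from $v$ to $w$ must be an $(\alpha,-\alpha)$-ditrail. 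Then $\reverse{P}$ is a $(-\alpha, \alpha)$-ditrail from $u$ to $v$, and splicing it onto the start of $R$ (which starts at $u$ with sign $\alpha$) — the sign of $u$ in $\reverse{P}$ at its $w$-side end, wait, at its far end is $-\alpha$ and $R$ starts with $\alpha$ at $u$, so the concatenation at $u$ has distinct signs and is a ditrail provided the two share no edge — produces an $(\alpha,\alpha)$-ditrail from $v$ to $w$ through $u$, contradicting $v \ssim{\alpha} w$.

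The hard part, and the step I expect to demand real care, is precisely the edge-sharing issue in the last move: $\reverse{P}$ and $R$ may share edges, so the naive concatenation need not be a trail. This is exactly the obstruction that Proposition~\ref{prop:reach} handles internally by the ``trace until the first shared edge, then splice'' device, and I anticipate the proof of Theorem~\ref{thm:skl} will need an analogous case analysis: trace $R$ from $u$, let $wz$ be the first edge it shares with $\reverse{P}$, and depending on the orientation in which $wz$ is traversed along $P$, splice either $u R z + z \reverse{P} v$ or $u R z + z P$ something, tracking signs at the splice point to confirm the resulting object is an $(\alpha,\alpha)$- (or $(\alpha,-\alpha)$-) ditrail of the appropriate type. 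One must also double-check the endpoint signs survive the surgery — that cutting $R$ at $z$ and attaching the tail of $P$ does not alter the sign at $u$ (it does not, since we keep the initial segment $uRz$) nor spoil the sign at the other end (which comes from $P$). Managing these sign bookkeeping cases for both the $u$-side splice and, symmetrically, a $w$-side splice when needed, is the technical heart; everything else is bookkeeping around Proposition~\ref{prop:reach}.
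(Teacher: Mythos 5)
Your overall architecture is the paper's: reflexivity and symmetry are immediate, and for transitivity you assume an $(\alpha,\alpha)$-ditrail $R$ from $u$ to $w$, invoke Proposition~\ref{prop:reach} to obtain an $(\alpha,\beta)$-ditrail $P$ from $v$ to $u$, note that $\beta=-\alpha$ because $u\ssim{\alpha}v$, and then try to splice $P$ and $R$. The gap is exactly where you predict it, and it is not mere bookkeeping: the surgery you sketch does not close it. If you trace $R$ from $u$ and cut at the first edge shared with $E(P)$, then in one of the two orientations of that shared edge the only sign-consistent continuation along $P$ runs back toward $u$, and the result is a closed trail from $u$ to $u$ --- a cyclic ditrail, which contradicts nothing in the hypotheses. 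Only the other orientation yields the $(\alpha,\alpha)$-ditrail between $u$ and $v$ you are after. So of the two cases in your ``splice either $uRz+z\reverse{P}v$ or $uRz+zP\cdots$'' step, one is genuinely unresolved, and the second ditrail you extracted from Proposition~\ref{prop:reach} (from $v$ to $w$) is never actually used to repair it.

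The paper's device is to launch the splice from the middle vertex $v$ and to cut at the first shared \emph{vertex} rather than the first shared edge. Writing $Q$ for the $(\alpha,-\alpha)$-ditrail from $v$ to $u$ (your $P$), trace $Q$ from $v$ and let $x$ be the first vertex of $Q$ lying on the assumed $(\alpha,\alpha)$-ditrail from $u$ to $w$ (your $R$). The prefix $vQx$ is then automatically edge-disjoint from $R$, and because $R$ is a ditrail, the signs of $x$ over its two neighboring edge terms in $R$ are distinct; hence at least one of $vQx+xRw$ and $vQx+x\reverse{R}u$ is sign-consistent at the junction $x$. Both candidates end with sign $\alpha$ (at $w$ and at $u$ respectively, since $R$ is an $(\alpha,\alpha)$-ditrail), so whichever is valid is an $(\alpha,\alpha)$-ditrail from $v$ to $w$ or from $v$ to $u$, contradicting $v\ssim{\alpha}w$ or $u\ssim{\alpha}v$. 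The idea you are missing is precisely this choice of launch point: starting from $v$, \emph{both} directions along $R$ terminate at a vertex against which one of the two hypotheses supplies a contradiction, whereas starting from $u$ one of the two directions is wasted.
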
 
\begin{proof} 
Reflexivity and symmetry are obvious from the definition. 
Let $u,v, w\in V(G)$ be vertices 
with $u\ssim{\alpha} v$ and $v\ssim{\alpha} w$. 
We prove $u\ssim{\alpha} w$ in the following. 
If any two from $u, v, w$ are identical, the statement obviously holds. 
Hence, assume that these are mutually distinct. 
It is obvious that $u, v, w$ are mutually circularly connected. 
Suppose $u\ssim{\alpha} w$ does not hold, 
and let $P$ be an  $(\alpha, \alpha)$-ditrail from $u$ to $w$. 
From Proposition~\ref{prop:reach},  
there is an $(\alpha, -\alpha)$-ditrail $Q$ from $v$ to $u$. 
Trace $Q$ from $v$, and let $x$ be the first encountered vertex in $P$. 
Then, either $vQx + xPw$ or $vQx + x\reverse{P}u$ is an $(\alpha, \alpha)$-ditrail from $v$ to 
$w$ or  $u$, respectively. 
This contradicts either $u\ssim{\alpha} v$ or $v\ssim{\alpha} w$. 
Therefore, $u\ssim{\alpha} w$ is proved. 
\end{proof}

For each $\alpha\in \{+, -\}$, 
we denote as $\sgpart{G}{\alpha}$ the family of equivalence classes of $\ssim{\alpha}$, 
and call this family the {\em general Kotzig-Lov\'asz decomposition} 
or simply the {\em Kotzig-Lov\'asz decomposition} of  the bidirected graph $G$ regarding the sign $\alpha$.  

Let $H$ be a circular component of $G$. 
From the definition of the equivalence relation, 
the family $\{ S\in \sgpart{G}{\alpha}: S \subseteq V(H)\}$ 
forms a partition of $V(H)$. 
Therefore, 
this decomposition of a bidirected graph 
can be considered as providing the inner structure of each circular component. 
If $G$ is digraphic, 
then $\{ S\in \sgpart{G}{\alpha}: S \subseteq V(H)\}$ 
coincides with $\{ V(H)\}$. 
The nontrivial aspects of the general Kotzig-Lov\'asz decomposition  
are characteristic to  bidirected graphs that are not digraphs.

Note that this inner structure of circular components  
is determined in the context of the entire bidirected graph; 
recall that the definition of $\ssim{\alpha}$ is given considering the entire $G$. 
It is easily confirmed that the family $\{ S\in \sgpart{G}{\alpha}: S \subseteq V(H)\}$  
 is not equal to $\sgpart{H}{\alpha}$ in general 
but is a refinement of $\sgpart{H}{\alpha}$. 

\section{Consequences for $b$-Factor Theory} \label{sec:bkl}

\subsection{Definitions regarding $b$-Factors} \label{sec:bkl:def}
Let $G$ be a graph, and let $b: V(G)\rightarrow \nonnegz$. 
A set of edges $M \subseteq E(G)$ is a {\em $b$-matching} 
if $|\parcut{G}{v} \cap M | \le b(v)$ holds for each $v\in V(G)$. 
A $b$-matching is {\em maximum } if it has the maximum number of edges.  
A $b$-matching $M$ is {\em perfect} if $|\parcut{G}{v} \cap M | = b(v)$ holds for each $v\in V(G)$. 
A perfect $b$-matching is also known as a {\em $b$-factor}.  
A $b$-factor is a maximum $b$-matching, however the converse does not necessarily hold. 
We say that $G$ is {\em $b$-factorizable} 
if it has a  $b$-factor.  
We denote $b$ by $\onevec$ if $b(v)$ is $1$ for every $v\in V(G)$. 
Thus, $\onevec$-matchings and $\onevec$-factors are the most fundamental concepts in matching theory, 
 also known as {\em matchings} or {\em perfect matchings}.  

Now, let $G$ be $b$-factorizable. 
An edge $e \in E(G)$ is {\em $b$-allowed} 
if $G$ has a $b$-factor that contains $e$; 
otherwise, we say that $e$ is {\em $b$-forbidden}.

A $b$-allowed edge is {\em $b$-flexible} 
if $G$ also has a $b$-factor that does not contain $e$; 
otherwise, we say that $e$ is {\em $b$-essential}.

For a subgraph $H$, 
$b|_H$ denotes the mapping $V(H)\rightarrow \nonnegz$ 
such that $b|_H(v) := b(v) - k$ for each $v\in V(H)$, 
where $k$ denotes the number of $b$-essential edges 
that connect $v$ and $V(G)\setminus V(H)$.  
For two mappings $b_1, b_2$: $V(G)\rightarrow \nonnegz$, 
$b_1 + b_2$ and $b_1 - b_2$ denote the mappings $V(G)\rightarrow Z$ 
such that $(b_1 + b_2)(v) = b_1(v) + b_2(v)$ 
and $(b_1 - b_2)(v) = b_1(v) - b_2(v)$ for each $v\in V(G)$. 
As usual, 
we utilize the associativity of these operations over mappings. 
Given a vertex $u\in V(G)$, 
$\inc{u}$ denotes the mapping  $V(G)\rightarrow \nonnegz$  
such that $\inc{u} = 1$ and $\inc{v} = 0$ for each $v\in V(G)\setminus \{u\}$.

Vertices $u,v\in V(G)$ are {\em $b$-flexibly connected} (resp. {\em $b$-factor connected}) 
if there is a path between $u$ and $v$ in which every edge is $b$-flexible 
(resp. {\em $b$-allowed}). 
A subgraph $H$ of $G$ is {\em $b$-flexibly connected} 
(resp. {\em $b$-factor connected}) 
if any two vertices in $H$ are $b$-flexibly connected (resp. $b$-factor connected) 
in $G$. 
A maximal $b$-flexibly connected subgraph  
is called a {\em $b$-flexibly connected component} 
or {\em $b$-flexible component} 
(resp. a {\em $b$-factor connected component} or {\em $b$-factor component}) of $G$. 
The graph $G$ consists of its $b$-flexible components, 
which are disjoint, and edges joining distinct $b$-flexible components, 
which are $b$-forbidden or $b$-essential. 
A $b$-factor component consists of 
some $b$-flexible components and edges joining them.

A set $M \subseteq E(G)$ is a $b$-factor 
if and only if 
it is the union of the set of $b$-essential edges and 
a set $M^*$ of the form $M^*= \bigcup \{ M_C: C\in\fcompb{G}{b} \}$, 
where $M_C$ is a $b|_C$-factor of  $C\in \fcompb{G}{b}$. 
The $b$-flexible components form the most fine-grained set of subgraphs that satisfies this property. 
Hence, 
$b$-flexible components can be considered as 
the fundamental units for $b$-factors. 

A similar property can be stated for $b$-factor components; 
that is, a set $M \subseteq E(G)$ is a $b$-factor if and only if 
it is a  union of a $b|_C$-factor, where $C$ is taken over every $b$-factor component. 
In $1$-factor theory, 
 $1$-factor components have been used as fundamental units of a graph. 
 For example, the Dulmage-Mendelsohn decomposition theory, a classical canonical decomposition for bipartite graphs, 
 relates to a poset over $1$-factor components. 
 More examples can be found in the basilica decomposition theory and 
 the polyhedral studies of $1$-factors.

\subsection{General Kotzig-Lov\'asz Decomposition for $b$-Factors} \label{sec:bkl:bkl} 

We now show that Theorem~\ref{thm:skl} implies the analouge of general Kotzig-Lov\'asz decomposition 
for $b$-factors. 
In this section, 
unless stated otherwise, 
let $G$ be a $b$-factorizable graph, where $b: V(G)\rightarrow \nonnegz$.  
We define a binary relation $\gsimb{\pm}{b}$ 
that is uniquely determined for $G$ and $b$.  
This relation is proved to be an equivalence relation 
by generating an auxiliary bidirected graph $G^M$ from $G$ 
and some $b$-factor $M$ and then applying Theorem~\ref{thm:skl} for $G^M$.

\begin{definition} 
We define binary relations $\gsimb{-}{b}$ and $\gsimb{+}{b}$ over $V(G)$ as follows:
For $u, v\in V(G)$, let $u\gsimb{-}{b} v$ (resp. $u\gsimb{+}{b} v$) 
if $u$ and $v$ are identical or if 
$u$ and $v$ are $b$-flexibly connected and  $G$ does not have a $b-\inc{u}-\inc{v}$-factor 
(resp. $b + \inc{u} + \inc{v}$-factor). 
\end{definition} 

In the following,  
we show that $\gsimb{-}{b}$ and $\gsimb{+}{b}$  are equivalence relations.

\begin{definition} 
Let $M\subseteq E(G)$. 
We denote by $G^M$ the bidirected graph obtained by 
endowing mappings $\signfs{+}$ and $\signfs{-}$ as follows: 
 for each $e\in E(G)$ with ends $u, v\in V(G)$, 
 let $\signfse{+}{e} = \emptyset$ and $\signfse{-}{e} = \{u,v\}$ if $e$ is an edge from $M$; 
 otherwise, let $\signfse{+}{e} = \{u, v\}$ and $\signfse{-}{e} = \emptyset$.  
\end{definition} 

The next lemma is easily confirmed 
from classical observations regarding $b$-matchings. 

\begin{lemma} \label{lem:reduction}  
Let $G$ be a $b$-factorizable graph, where $b: V(G)\rightarrow \nonnegz$, 
and let $M$ be a $b$-factor of $G$. 
Then, for each $u,v\in V(G)$, 
$u\gsimb{\alpha}{b} v$ holds in $G$ 
if and only if $u\ssim{\alpha} v$ holds in $G^M$. 
\end{lemma}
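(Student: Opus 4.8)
The plan is to establish the equivalence by matching up the two defining conditions of each relation term by term: the ``connectivity'' clause and the ``no bad factor'' clause. So I would split the argument into two correspondences. First, I would show that $u$ and $v$ are $b$-flexibly connected in $G$ if and only if $u$ and $v$ are circularly connected in $G^M$. Second, fixing $\alpha \in \{+,-\}$, I would show that $G$ has a $b - \inc{u} - \inc{v}$-factor (if $\alpha = -$) or a $b + \inc{u} + \inc{v}$-factor (if $\alpha = +$) if and only if $G^M$ has an $(\alpha,\alpha)$-ditrail between $u$ and $v$. Granting both correspondences, the biconditional $u \gsimb{\alpha}{b} v \iff u \ssim{\alpha} v$ follows immediately by comparing the two definitions (the ``$u = v$'' case being trivial on both sides).

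For the key technical tool I would invoke the classical correspondence between symmetric differences of $b$-factors and even subgraphs: if $M$ is a $b$-factor and $N$ is any other $b$-matching, then $M \triangle N$ decomposes into alternating closed trails and alternating paths, where ``alternating'' means edges alternate between $M$ and $E(G) \setminus M$; in the language of $G^M$, an alternating closed trail through $e$ is exactly a cyclic ditrail through $e$, and an alternating path between $u$ and $v$ whose end-edges both lie outside $M$ (resp. both lie in $M$) corresponds to a $(+,+)$-ditrail (resp. a $(-,-)$-ditrail) between $u$ and $v$. Concretely: $e$ is $b$-flexible iff $G$ has a $b$-factor missing $e$ together with one containing $e$; taking the symmetric difference of those two $b$-factors produces an alternating closed trail through $e$, i.e.\ $e$ is circular in $G^M$; conversely a cyclic ditrail through $e$ in $G^M$ can be used to toggle $M$ along it, producing a $b$-factor that differs from $M$ exactly on $e$'s membership, so $e$ is $b$-flexible. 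Since $b$-flexible connectivity and circular connectivity are both defined as ``joined by a path of flexible/circular edges,'' the first correspondence follows. For the second, a $b - \inc{u} - \inc{v}$-factor $N$ has $M \triangle N$ containing, besides closed trails, a single $M$-alternating $u$–$v$ path whose two end-edges are both in $M$ (because $N$ has deficiency $1$ at both $u$ and $v$ while $M$ is perfect there), which is a $(-,-)$-ditrail in $G^M$; conversely a $(-,-)$-ditrail between $u$ and $v$, toggled against $M$, yields such a factor. The $\alpha = +$ case is symmetric, with end-edges outside $M$ and deficiency replaced by excess.

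I expect the main obstacle to be bookkeeping at the endpoints: verifying that ``the symmetric difference $M \triangle N$ behaves at $u$ and $v$ exactly as a ditrail of the prescribed end-signs,'' and conversely that toggling $M$ along a ditrail $P$ yields a genuine $b$-matching (not merely a subgraph with the right parities) with the claimed degree defects only at $u$ and $v$. One must check that $P$ is a \emph{path}, not just a ditrail, so that no vertex's degree is perturbed by more than the intended amount---or, more carefully, that the symmetric-difference decomposition can always be taken to consist of paths and cycles with the relevant ditrail as one component. This is exactly the kind of verification the paper signals as ``easily confirmed from classical observations regarding $b$-matchings,'' so I would cite the standard $b$-matching exchange lemma (e.g.\ from Schrijver~\cite{schrijver2003}) rather than reprove it, and spend the writeup instead on the translation into the sign conventions of $G^M$.
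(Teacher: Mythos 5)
Your proposal is correct and is exactly the argument the paper has in mind: the paper offers no written proof of this lemma, saying only that it is ``easily confirmed from classical observations regarding $b$-matchings,'' and your two correspondences ($e$ is $b$-flexible iff $e$ is circular in $G^M$ via symmetric differences of $b$-factors, and $(b-\inc{u}-\inc{v})$- resp.\ $(b+\inc{u}+\inc{v})$-factors correspond to $(-,-)$- resp.\ $(+,+)$-ditrails) are precisely the standard observations being invoked. Your flagged worry about paths versus trails is in fact a non-issue: $\ssim{\alpha}$ is defined via ditrails, and toggling $M$ along any ditrail works because every internal vertex occurrence contributes exactly one $M$-end and one non-$M$-end, so degrees are perturbed only at the two endpoints.
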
 

Under Lemma~\ref{lem:reduction}, Theorem~\ref{thm:skl} thus derives the next theorem. 

\begin{theorem} \label{thm:bkl}
Let $G$ be a $b$-factorizable graph, where $b: V(G)\rightarrow \nonnegz$. 
Then, $\gsimb{-}{b}$ and $\gsimb{+}{b}$ are equivalence relations over $V(G)$. 
\end{theorem}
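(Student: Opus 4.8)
The plan is to derive Theorem~\ref{thm:bkl} as an immediate corollary of Theorem~\ref{thm:skl} together with Lemma~\ref{lem:reduction}, so essentially no independent work is needed once those two results are in hand. First I would fix a $b$-factor $M$ of $G$, which exists because $G$ is $b$-factorizable, and form the auxiliary bidirected graph $G^M$. By Lemma~\ref{lem:reduction}, for each $\alpha\in\{+,-\}$ the relation $\gsimb{\alpha}{b}$ on $V(G)$ coincides, pair for pair, with the relation $\ssim{\alpha}$ on $V(G^M)$. Since $V(G)=V(G^M)$ as sets, a relation on the one is literally the same subset of $V(G)\times V(G)$ as the corresponding relation on the other, so the equivalence-relation property transfers verbatim.

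The key steps, in order, are: (1) invoke $b$-factorizability to pick some $b$-factor $M$ and construct $G^M$; (2) apply Theorem~\ref{thm:skl} to the bidirected graph $G^M$ with the sign $\alpha=-$ to conclude that $\ssim{-}$ is an equivalence relation on $V(G^M)$; (3) apply Lemma~\ref{lem:reduction} with $\alpha=-$ to conclude that $\gsimb{-}{b}$ equals $\ssim{-}$ and hence is an equivalence relation on $V(G)$; (4) repeat steps (2)--(3) with $\alpha=+$ to handle $\gsimb{+}{b}$. One should note that the conclusion of Lemma~\ref{lem:reduction} is independent of which $b$-factor $M$ is chosen, which is consistent with $\gsimb{\pm}{b}$ being determined by $G$ and $b$ alone; this is worth a one-line remark but requires no proof here since the lemma already asserts the equivalence for any fixed $b$-factor.

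The only potential obstacle is purely bookkeeping: making sure the definitions line up so that ``$u$ and $v$ are $b$-flexibly connected in $G$'' matches ``$u$ and $v$ are circularly connected in $G^M$'', and that ``$G$ has no $b-\inc{u}-\inc{v}$-factor'' matches ``there is no $(-,-)$-ditrail between $u$ and $v$ in $G^M$'' (and symmetrically for the $+$ case with $b+\inc{u}+\inc{v}$ and $(+,+)$-ditrails). But this correspondence is exactly the content of Lemma~\ref{lem:reduction}, which we are entitled to assume, so there is no real difficulty — the proof is a two-line deduction. I would therefore write the proof essentially as: ``Let $M$ be a $b$-factor of $G$. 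By Lemma~\ref{lem:reduction}, $\gsimb{-}{b}$ and $\gsimb{+}{b}$ on $V(G)$ coincide with $\ssim{-}$ and $\ssim{+}$ on $V(G^M)$, respectively, which are equivalence relations by Theorem~\ref{thm:skl}. Hence $\gsimb{-}{b}$ and $\gsimb{+}{b}$ are equivalence relations over $V(G)$.''
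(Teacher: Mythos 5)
Your proposal is correct and matches the paper exactly: the paper derives Theorem~\ref{thm:bkl} in one line by combining Lemma~\ref{lem:reduction} (applied to $G^M$ for a chosen $b$-factor $M$) with Theorem~\ref{thm:skl}. Your more explicit step-by-step write-up, including the remark that the conclusion is independent of the choice of $M$, is a faithful elaboration of the same argument.
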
 

For each $\alpha \in \{+, -\}$, 
we denote as $\sgpartb{G}{b}{\alpha}$ 
the family of equivalence classes of $\gsimb{\alpha}{b}$. 
We call $\sgpartb{G}{b}{+}$ and $\sgpartb{G}{b}{-}$ the {\em general Kotzig-Lov\'asz decompositions} 
or simply {\em the Kotzig-Lov\'asz decompositions} 
 of the $b$-factorizable graph $G$ {\em  by addition} and {\em subtraction}, respectively. 

For each $b$-flexible component $H$, 
the family $\{ S \in \sgpartb{G}{b}{\alpha}  : S \subseteq V(H) \}$ 
is a partition of $V(H)$, 
and thus 
$\sgpartb{G}{b}{\alpha}$ can be regarded as 
providing the inner canonical structure of each $b$-flexible component. 
If $b = 1$ and $\alpha = -$, 
then this family provides the nontrivial aspect of 
the known general Kotzig-Lov\'asz decomposition for $1$-factorizable graphs. 

As is also the case in bidirected graphs, 
this inner structure $\{ S \in \sgpartb{G}{b}{\alpha}  : S \subseteq V(H) \}$  
is determined in the context of the entire graph $G$ 
and is a refinement of $\sgpartb{H}{b|_H}{\alpha}$.

\begin{ac} 
This work was partly supported by JSPS KAKENHI 15J09683. 
\end{ac}

\bibliographystyle{splncs03.bst}
\bibliography{%
signkl.bib}

\end{document}